\newtheorem{theorem}{Theorem}
\title{Robust regularization of topology optimization problems with aposteriori error estimators}
\author[skoltech]{G. V. Ovchinnikov}
\author[nyu,skoltech]{D. Zorin}
\author[skoltech]{I. V. Oseledets}
\begin{document}

\begin{abstract}

   Topological optimization finds a material density distribution minimizing a functional of the solution of a partial differential equation  (PDE), subject to a set of constraints (typically, a bound on the volume or mass of the material).

Using a finite elements discretization (FEM) of the PDE and functional we obtain an integer programming problem. 
Due to approximation error of FEM discretization, optimization problem becomes mesh-depended and possess false, physically inadequate optimums, while functional value heavily depends on fineness of discretization scheme used to compute it.
To alleviate this problem, we propose regularization of given functional by error estimate of FEM discretization. This regularization provides robustness of solutions and  improves obtained functional values as well.

While the idea is  broadly applicable, in this paper we apply our method to the  heat conduction optimization. This type of problems are of  practical importance in design of heat conduction channels, heat sinks and other types of heat guides.

{\bf Keywords:} Topological optimization, greedy methods, finite element methods, error estimators, regularization.
\end{abstract}
\maketitle

\section{Introduction}
Topology optimization methods aim to find a material distribution in a volume that minimizes a functional depending on the solution of a PDE describing a physical process, e.g., elastic deformation, heat distribution or electrical current flow. 

As in general these problems are noncovex, and even non-smooth,  most methods can be viewed as heuristics and do not guarantee reaching a global or local optimum. 
The solutions produced by commonly used techniques may exhibit substantial mesh dependence with significant difference in functional values obtained for different mesh resolutions, often due to non-physical solutions (``checkerboards''). These solutions are eliminated by different types of regularization techniques, penalizing non-smoothness of the solutions.

While some mesh dependence is unavoidable for discretized problems, we observe that undesirable mesh-dependent solutions are the ones for which the FEM discretization does not yield an accurate estimate of the functional. Informally, the less ``smooth'', in the sense of presence of rapid oscillations, the discrete solutions is, the larger is the range of functional values that can be attained by functions agreeing with the discrete solution at sample points. 

This view suggests the following idea for regularization: instead of choosing a measure of smoothness \emph{a priori},
we directly include an estimate of the range of possible functional values in the regularized functional, by adding a
term corresponding to the FEM error. While the approximation error can not be measured directly,
computable upper bounds are typically available.  Thus, instead of minimizing the value of the discretized functional, we minimize the upper bound of the true functional. 

More specifically, we use \emph{aposteriori estimates} to modify the original functionals.
This allows us to perform topological optimization using standard sensitivity-based greedy algorithms using a fixed (non-adaptive) discretization. which is important for efficient implementation of the method (including much simpler parallelization).

To summarize, our approach consists of the following elements:
\begin{itemize}
    \item We propose using \emph{a posteriori} error estimators as reliable regularizers for topological optimization.
\item We obtain a weak FEM problem formulation to compute the sensitivity of our regularized functionals which can be solved efficiently using a standard FEM solver; 
\item We describe a greedy optimization algorithm based on the computed sensitivity to compute the optimal topology. 
\item For two model problems, we obtained new configurations that have significantlylower functional values that the configurations reported in the literature.
\end{itemize}
Although we focus on a particular model problem in heat transfer, our approach can be easily generalized to other topology optimization settings.

We start with the description of the method (Sections~\ref{sec:continuous}-\ref{sec:optimization}) and
then compare it to closely related methods and discuss its relationship to most common topology optimization techniques (Section~\ref{sec:experiments},\ref{sec:discussion}).

\section{Continuous problem}
\label{sec:continuous}
As a model problem in this paper we consider the steady heat conduction problem:
\begin{equation} 
    \label{to:heat}
    -\nabla \cdot (w \nabla u) = f, \quad u_{\partial \Omega} = 0,
\end{equation}
where $\Omega$ is a bounded region with a sufficiently smooth boundary $\partial \Omega$, $w$ is in the set of 
piecewise-continuous functions, which we denote $PC$, $f \in L^2(\Omega)$,
and we search for $u \in V$, where $V$ is the Sobolev space $W^1_2(\Omega)$ of functions with zero trace (due to the boundary conditions). 

We solve a two-material optimization problem, i.e.,  $w$ is allowed to take only two values: $1$ (corresponding to the main material) and $\varepsilon \ll 1$  (corresponding to the filler material), with $\varepsilon$ being  the ratio of conductivity of filler material to that of main material. Our model topology optimization problem is formulated as follows: find a material distribution $w$ such that
\begin{equation}
\begin{aligned}
& \underset{w \in PC}{\text{min}}
        & & F(w) = \int_{\Omega} u(x) f(x) \, dx = (u,f), \\
& \text{s. t.}
& & \int w(x) \, dx \geq c, \quad u \text{ is solution of (\ref{to:heat})}.
\end{aligned}
\label{to:optproblem}
\end{equation}

A physical interpretation of this optimization problem is the design of  optimal heat-conducting device, producing least amount of heat when amount of high-conductivity material is limited.  The same mathematical formulations appears not only in heat conduction problems, but also in electrostatic problems \cite{steven2000evolutionary} or modelling of amoeboid organism growing towards food sources \cite{safonov}.

Note that in general, the problem does not have an optimal solution in $V$; regularized versions of the problem however, may have sufficiently smooth solutions. The analysis of existence and smoothness of solutions of the optimization problem is beyond the scope of this paper. 

\section{Discretization}
\label{sec:discrete}
A common way to solve  problem (\ref{to:heat}) is to introduce in $\Omega$ a mesh $\Omega_h$, consisting of elements $\Omega_h^i$, and replace $w$ by its projection  $w_h$
to the finite-dimensional space of functions $\theta_i$, which are piecewise constant on the elements:
$$w_h(x) = \sum_{i=1}^n \eta^h_i \theta_i(x),\,\eta^h_i \in \{\varepsilon,1\},\,\theta_i(x)  \in \{0,1\}, $$
where $n$ is number of mesh cells and  $\theta_i(x)$ is 1 on $i$-th cell, and zero otherwise.
Problem \eqref{to:heat} can be written in the following variational form:
\begin{equation}
  \label{to:bl}
  B(u, v) = l(v), \quad \forall v \in V, 
\end{equation} 
where $B(u,v)$ is a bilinear form on $V \times V$ and $l(v)$ is a linear functional on 
dual space:
$$
B(u,v) = \int_{\Omega} w \nabla u \nabla v \, d\Omega, \quad
l(v) =\int_{\Omega} f v \, d\Omega.
$$
We obtain finite element discretization by replacing $V$ in \eqref{to:bl} by a suitable finite-dimensional space $V_h$.
This leads us to the following system of linear equations for $u_h \in V_h$:
\begin{equation}
    \label{to:fem1}
  B(u_h, v_h) = l(v_h), \quad \forall v_h \in V_h,  
\end{equation}
From now on, we will assume $f$ is a function for which $f = f_h$.

The optimization problem \eqref{to:optproblem} is then approximated by the following integer programming problem:
\begin{equation}
    \begin{aligned}
        & \underset{\eta^h_i \in \{\varepsilon,1\}^n}{\text{min} }
            F_h(w_h) = (u_h, f_h), \quad  \text{s.t.}
            \sum_{i=1}^{n} {\eta^h}_i \geq c, \quad u_h \text{ is solution of \eqref{to:fem1}}.
\end{aligned}
\label{to:intprog}
\end{equation}
The solutions to the discretized problem may exhibit spurious behavior, becoming increasingly oscillatory, with non-physical ``checkerboard'' patterns emerging, with oscillations on the scale of mesh elements. The emergence of these patterns in the case of elasticity is attributed to the fact that on the scale of the mesh, the discretization exhibit artificial stiffness resulting in a significant underestimation of the functional value.  We propose a new way of tackling this type of problems by introducing an additional term into the functional that penalizes $\eta$ that give rises to large errors in the solution. 

\section{\emph{A posteriori} error estimators as regularizers}
\label{sec:regularizer}
Transition from the initial optimization problem \eqref{to:optproblem} to its discrete counterpart
\eqref{to:intprog} introduces a discretization error, which can be estimated from above as
$$
(u, f) - (u_h, f_h)  \leq \| u_h - u \| \| f \|.
$$

The major contribution of this paper is in consideration of the following regularized functional:
\begin{equation}
\label{F_reg}
        F_h (\alpha, w_h) = F_h(w_h) + \alpha \Uptheta_h (w_h),
\end{equation}
where the regularization term $\Uptheta_h(\eta)$ is an upper bound for the error of solution given by $\eta$. We assume that for some $\alpha > \alpha_0$ it provides an upper bound for the true functional:
$$
  F(w_h) \leq F_h(\alpha, w_h).
$$
  Our idea is to minimize the upper bound $F_h(\alpha, w_h)$, instead of the discretization of the functional itself. While the value of $F_h(w_h)$ for the resulting solution may be larger, one can guarantee that $F(w_h)$ is not arbitrarily large, unlike the case of standard greedy methods, which may lead to solutions for which  $F(w_h)$ can be significantly larger than $F_h(w_h)$.

  There are many ways to introduce the regularizer term based on the error estimation; we present a simple error estimator bounding the squared error: 
  $$\Vert u - u_h \Vert^2 \leq \Uptheta_h(w_h)$$
  to ensure that the regularized functional is differentiable everywhere.
  We obtain a simple regularizer from from the following variational form \cite{gratsch2005}. Let $e = u - u_h$, then 
$$
  B(e, q) = l(q) - B(u_h,q),\,\, \forall q \in V. 
$$
For  Galerkin discretizations, the error is orthogonal to $V_h$, and hence we need to choose  
subspace $V_h' \neq V_h$ to solve for the error:
\begin{equation}
    \label{weak_err}
        B(\tilde{e}_h, q_h) = l(q_h) - B(u_h,q_h),\,\, \forall q_h \in V_h'. 
\end{equation}

Choosing $\Uptheta_h(w_h) = (\tilde{e}_h, \tilde{e}_h)$ leads us to the regularized version of (\ref{to:intprog}):
\begin{equation}
\begin{aligned}
& \underset{\eta_h \in \{0,1\}^n}{\text{min}}
        & & F_h(\alpha, w_h) = (u_h, f_h) + \alpha (\tilde{e}_h, \tilde{e}_h) \\
& \text{s. t.}
        & & \sum_i \eta^h_i \geq c,\quad u_h \text{ is a solution of (\ref{to:fem1})},\quad  \tilde{e}_h \text{ is solution of (\ref{weak_err}).}
\end{aligned}
\label{to:regintprog}
\end{equation}

\section{Optimization problem}
\label{sec:optimization}
\subsection{Greedy optimization}
To solve \eqref{to:intprog} we use the approach that has been successfully used in the topology optimization: the greedy 
``hard-kill'' methods introduced in \cite{xie1993simple}, \cite{xie1997} and \cite{querin1998},
and used more recently in \cite{gao2008}). Those methods remove/add a whole patch of material on each step, in contrast to the ``soft-kill'' methods which evolve a smooth material density function which is discretized in the final step by thresholding and/or filtering.
One can easily see that both continuous and discrete functionals $F(w)$ and $F_h(w_h)$ are monotonous with respect to changes in $w$, 
so ``removing material'' (i.e., replacing $1$ by $\varepsilon$) always increases it, because 
boundary conditions are chosen such that $u$ to be non-negative.
The greedy approach to solving this problem removes the material from an element
that results in least change to the functional. To estimate the change in the functional, we use its \emph{sensitivity}, i.e.,  the derivative of the functional with respect to the coefficient $\eta^h_i$ of the material distribution corresponding to the $i$-th element:
\begin{equation}\label{to:sensitivity}
        S_i = \frac{\partial F_h}{\partial \eta_i}, \quad i=1, \ldots, n, 
\end{equation}
An efficient way to compute sensitivities will be described in Section \ref{to:sens_sec} 
Algorithm \ref{al:greedy} summarizes the complete method.
\begin{algorithm}
 \SetAlgoLined
        \KwData{Total material bound $c$, number of patches to remove at each step $p_s$, mesh $\mathcal{M}$, low material density $\varepsilon$, regularization parameter $\alpha$}
        \KwResult{Vector of the material distribution coefficients $\eta^h$ and the value of $F_h$}
    Fill the  whole mesh $\mathcal{M}$  by setting $\eta_i = 1, i = 1,\dots,n$\; 
        \While {$\sum_i \eta_i > c$}{
    Calculate sensitivities of regularized functional $\partial{F_{h, \alpha}}/\partial \eta_i$\;
    Choose up to $p_s$ elements with smallest sensitivities\;
    Set those elements density to $\varepsilon$\;
        }
    Calculate $F_h$\;
    \Return{$F_h$, $\eta$}
\caption{Greedy method}
\label{al:greedy}
\end{algorithm}

For a given problem we can vary the number of elements $n$ and mesh connectivity, which affects the material distribution and functional value. We will discuss the  effects of those parameters in numerical experiments section \ref{sssec:params}.

\subsection{Computing sensitivity}\label{to:sens_sec}
The derivatives of the regularized functional consists of two terms, corresponding to the original functional and the regularization term.  The expression for the former is well-known (we provide the derivation for the sake of completeness); we derive the expression for the regularizer.

We represent the form $B$ in finite-dimensional spaces $V$ and $V'$ by symmetric stiffness matrices $K$ and $M$: $B(u_h,v_h) = (Ku_h, v_h)$, and $B(u_h',v_h') = (Mu_h', v_h')$, which leads us to the following equations for $u_h$ and $u_h'$:
\begin{equation}
\label{FEM_K}
Ku_h = f_h,
\end{equation}
\begin{equation}
\label{FEM_M}
Mu_h' = f_h'.
\end{equation}

We use the same notation $u_h$  for functions and their coefficient vectors, as the  difference is clear from the context
(the coefficient vectors are used only in matrix equations).

Matrices $P$ and $P'$ project a function in $V_h$ to a function in $V_h'$ and vice versa.
To simplify derivation, for the rest of the paper, we assume that $f_h' = f_h = f$ and $V_h \in V_h'$, for example $V_h'$ being generated by the same basis function family, but on finer mesh, hence $P$ is the inclusion operator.

The discretized unregularized functional can be written as $F_h = u_h^T f_h$; Using
$K_h u_h = f_h$, where $f_h$ is independent of $\eta$, we obtain 

$$  \frac{\partial{F_h}}{{\partial \eta^h_i}} = \left( \frac{\partial K^{-1}}{\partial \eta^h_i}f_h, f_h \right), 
$$
From $KK^{-1} = I$ it follows
$$
\frac{\partial{K}}{\partial \eta^h_i} K^{-1}+ \frac{\partial{K^{-1}}}{\partial \eta^h_i}K = 0.$$
We conclude that
\begin{equation}
  \frac{\partial{F_h}}{{\partial \eta_i}} = -\left(\frac{\partial{K}}{\partial \eta_i} u_h, u_h\right).
\label{eq:unreg-sensitivity}
\end{equation}

\subsubsection{Sensitivity of the regularization term}
Now, we have two natural choices for error estimate $\tilde{e}_h$ from (\ref{to:regintprog}).
First, $e_h = u_h - P'u_h$, and second, $e_h' = Pu_h - u_h'$.
\begin{theorem}
Sensitivity of the functional $F_{h,\alpha}$ regularized by $e_h$ is
\begin{equation}
\label{eh_sens}
\frac{\partial F_{h,\alpha}}{\partial \eta_i} = -\left(\frac{\partial{K}}{\partial \eta_i} u_h, u_h\right) -2 \alpha \left(P'M^{-1}\frac{\partial M}{\partial \eta_i}u_h', e_h\right) + 2\alpha  \left(\frac{\partial K}{\partial \eta_i}u_h, K^{-1}e_h\right). 
\end{equation}
Sensitivity of the functional $F_{h,\alpha}$ regularized by $e_h'$ is
\begin{equation}
\label{eh_dash_sens}
\frac{\partial F_{h,\alpha}}{\partial \eta_i} = -\left(\frac{\partial{K}}{\partial \eta_i} u_h, u_h\right) -2 \alpha \left(\frac{\partial M}{\partial \eta_i}u_h', M^{-1}e_h'\right) + 2\alpha  \left(PK^{-1}\frac{\partial K}{\partial \eta_i}u_h, e_h'\right). 
\end{equation}
\end{theorem}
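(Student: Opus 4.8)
The plan is to write the regularized functional as $F_{h,\alpha} = F_h + \alpha\,\Theta_h$ with $\Theta_h = (\tilde e_h, \tilde e_h)$, reuse the already-derived expression \eqref{eq:unreg-sensitivity} for $\partial F_h/\partial \eta_i$, and reduce everything to computing the derivative of the quadratic regularizer by the chain rule, $\partial_{\eta_i}(\tilde e_h,\tilde e_h) = 2(\partial_{\eta_i}\tilde e_h,\tilde e_h)$. The two candidate estimators, the projected coarse error $e_h = u_h - P'u_h'$ and the fine-space error $e_h' = Pu_h - u_h'$, can then be handled by one and the same template, producing \eqref{eh_sens} and \eqref{eh_dash_sens} respectively.

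The central step is implicit differentiation of the two state equations. Since $f_h$ and $f_h'$ are independent of $\eta$ and $P,P'$ are fixed, $\eta$-independent operators (here $P$ is the inclusion associated with $V_h \subset V_h'$), differentiating $Ku_h = f_h$ and $Mu_h' = f_h'$ gives $\partial_{\eta_i}u_h = -K^{-1}(\partial_{\eta_i}K)u_h$ and $\partial_{\eta_i}u_h' = -M^{-1}(\partial_{\eta_i}M)u_h'$, which is the matrix analogue of the identity the paper already used from $KK^{-1}=I$. Differentiating the estimators termwise then yields $\partial_{\eta_i}e_h = -K^{-1}(\partial_{\eta_i}K)u_h + P'M^{-1}(\partial_{\eta_i}M)u_h'$, and the analogous expression $\partial_{\eta_i}e_h' = -PK^{-1}(\partial_{\eta_i}K)u_h + M^{-1}(\partial_{\eta_i}M)u_h'$ for the fine-space case.

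I would then substitute these into $2\alpha(\partial_{\eta_i}\tilde e_h,\tilde e_h)$, giving for each estimator two inner-product contributions. To bring them into the stated form I would invoke the symmetry of $K$ and $M$, hence of $K^{-1}$ and $M^{-1}$, to relocate the inverse across the inner product, e.g. $(K^{-1}(\partial_{\eta_i}K)u_h, e_h) = ((\partial_{\eta_i}K)u_h, K^{-1}e_h)$, which is precisely the pairing appearing in \eqref{eh_sens}, and likewise $((\partial_{\eta_i}M)u_h', M^{-1}e_h')$ in \eqref{eh_dash_sens}. Adding the unregularized contribution \eqref{eq:unreg-sensitivity} completes each formula.

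The computation is essentially routine; the hard part will be the sign and adjoint bookkeeping, both in the implicit-differentiation step (the minus sign from $\partial_{\eta_i}u_h$) and in moving $K^{-1},M^{-1}$ through the symmetric forms, together with verifying that no extra contribution arises from $f$ or from the projections. The only modelling assumption genuinely doing work is that $P$, $P'$, and $f$ do not depend on $\eta$: this is what collapses the derivative to the two clean terms per estimator. I would flag that if $P'$ were instead an $\eta$-dependent Galerkin projection, a $\partial_{\eta_i}P'$ term would survive and the two-term expressions \eqref{eh_sens}--\eqref{eh_dash_sens} would no longer hold.
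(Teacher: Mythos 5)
Your proposal is correct and follows essentially the same route as the paper's proof: you differentiate the state equations implicitly (which is equivalent to the paper's use of $\partial_{\eta_i}K^{-1} = -K^{-1}(\partial_{\eta_i}K)K^{-1}$ applied to $e_h = K^{-1}f_h - P'M^{-1}f_h'$ and $e_h' = PK^{-1}f_h - M^{-1}f_h'$), apply the chain rule to the quadratic regularizer, and use the symmetry of $K$ and $M$ to relocate the inverses across the inner product. One remark on the sign bookkeeping you yourself flag: carried to the end, the computation (yours and the paper's proof body alike) gives $\partial_{\eta_i}(e_h,e_h) = -2\bigl((\partial_{\eta_i}K)u_h,\,K^{-1}e_h\bigr) + 2\bigl(P'M^{-1}(\partial_{\eta_i}M)u_h',\,e_h\bigr)$, so the two $\alpha$-terms emerge with signs opposite to those printed in \eqref{eh_sens} and \eqref{eh_dash_sens} --- a discrepancy internal to the paper between its theorem statement and its own proof, not a flaw in your argument.
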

\begin{proof}
The first part of the formulas is given by \eqref{eq:unreg-sensitivity}.  
For the second part,  notice that from \eqref{FEM_K} and \eqref{FEM_M} follows: 
\[
        e_h = K^{-1}f_h - P'M^{-1}f_h'.
\]
Then sensitivity of $e_h$ can be written as
\[
        \frac{\partial e_h}{\partial \eta_i} = -K^{-1}\frac{\partial K}{\partial \eta_i}K^{-1}f_h + P'M^{-1}\frac{\partial M}{\partial \eta_i}M^{-1}f_h',
\]
and hence
\[
        \frac{\partial (e_h, e_h)}{\partial \eta_i} = -2(\frac{\partial K}{\partial \eta_i}u_h, K^{-1}e_h) + 2(P'M^{-1}\frac{\partial M}{\partial \eta_i}u_h', e_h). 
\]
This, combined with \eqref{eq:unreg-sensitivity} proves \eqref{eh_sens}.
Now, consider $e_h'$:
\[
        e_h' = PK^{-1}f_h - M^{-1}f_h'.
\]
Then sensitivity of $e_h'$ can be written as
\[
        \frac{\partial e_h'}{\partial \eta_i} = -PK^{-1}\frac{\partial K}{\partial \eta_i}K^{-1}f_h + M^{-1}\frac{\partial M}{\partial \eta_i}M^{-1}f_h',
\]
and hence
\[
        \frac{\partial (e_h', e_h')}{\partial \eta_i} = -2(PK^{-1}\frac{\partial K}{\partial \eta_i}u_h, e_h') + 2(\frac{\partial M}{\partial \eta_i}u_h', M^{-1}e_h'). 
\]
This, combined with \eqref{eq:unreg-sensitivity} proves \eqref{eh_dash_sens}.

\end{proof}

\subsection{Computational details}
To compute $(\partial{K}/\partial \eta_i u_h, u_h)$,  $(\partial M /\partial \eta_i\, u_h', M^{-1}e_h')$ and
$(\partial K /\partial \eta_i\,u_h, K^{-1}e_h)$ 
observe that
\begin{equation}
\label{int_sens1}
\left( \frac{\partial K}{\partial \eta_i} u_h, v_h \right) = \int_{\Omega} \theta_i \nabla u_h \nabla v_h \, d\Omega,\, i=1,\dots, n,
\end{equation}
\begin{equation}
\label{int_sens2}
\left( \frac{\partial M}{\partial \eta_i} u_h', v_h' \right) = \int_{\Omega} \theta_i \nabla u_h' \nabla v_h'\, d\Omega,\,\, i=1,\dots, n.
\end{equation}
Using these expressions we compute sensitivities by solving for $u_h$ and $u_h'$
using a FEM solver, computing their gradients, and  then compute integrals (\ref{int_sens1}) and (\ref{int_sens2}).  If an optimal-complexity linear solver is used (e.g., multigrid) the overall cost of the computation is linear in the total number of elements
in $V_h$ and $V'_h$.

\section{Numerical experiments}
\label{sec:experiments}
\subsection{Test cases}

Although  topology optimization by greedy methods has been studied extensively, there are relatively few papers
considering topology optimization for heat conduction. We use \cite{gao2008} and \cite{gersborg2006} for comparison
purposes.

We consider two model 2D problems, both of each have been studied in literature, which
allows us to provide comparisons with other methods: problem A was considered
in \cite{gersborg2006} and Problem B in \cite{zuo2005} and \cite{gao2008}.

Both problems are formulated on a simple square domain of size 1, and a uniform heat distribution $f$ with unit density
is assumed.

\textbf{Problem A.} For this problem,  Dirichlet boundary conditions are specified on two boundary segments
and  Neumann conditions   on the other two (Figure~\ref{fig:bc}, left).
The target volume fraction $c$ occupied by the material for this problem is set to $0.4$, to be consistent with
\cite{gersborg2006}.

\textbf{Problem B.} For this problem, Dirichlet conditions are specified on the whole boundary  (Figure~\ref{fig:bc}),
and the target volume fraction $c$ occupied by the material for this problem is set to $0.5$,
following  \cite{zuo2005} and \cite{gao2008}.

\begin{figure}[!htb]
    \centering
\begin{subfigure}{0.49\linewidth} 
\centering
\resizebox{!}{0.6\textwidth}{
\includegraphics{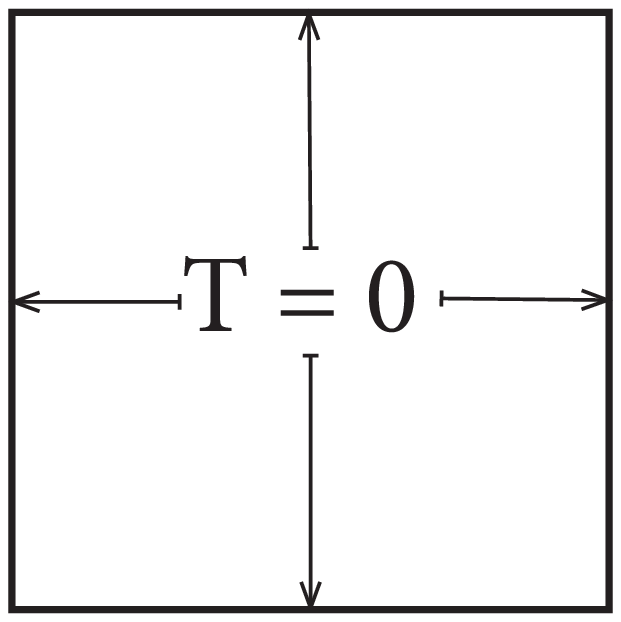}
}

\subcaption{Problem A}
\end{subfigure}
\centering
\begin{subfigure}{0.49\linewidth} 
\centering
\resizebox{!}{0.6\textwidth}{
\includegraphics{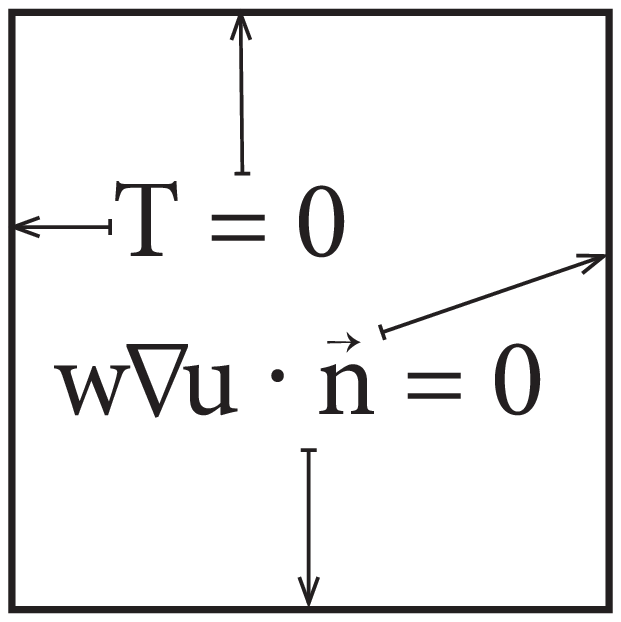}
}
\subcaption{Problem B}
\end{subfigure}
        \caption{Boundary conditions.}
\label{fig:bc}
\end{figure}

Our main comparison measure is the \emph{relative increase in the functional value}
$$
\widehat{F}_h = \frac{F_h}{F_0},
$$
where $F_0$ is the value of the functional corresponding to the whole domain filled with material, i.e., $\eta^h_i = 1$ for all elements.  
Since the functional always increases with the removal of the material, $\widehat{F}_h \geq 1$ the smaller this number is, the better is the result. 

\subsection{Implementation}
We implemented Algorithm \ref{al:greedy} using FireDrake finite element solver (\cite{FD2015}, \cite{luporini2016}, \cite{luporini2015}, \cite{mcrae2014}). 
We use the regularized functional (\ref{F_reg}) and expressions for sensitivities (\ref{int_sens1}), (\ref{int_sens2}).
While for numerical experiments we consider only squares, the approach, obviously, works
for arbitrary sufficiently regular geometry (the problem must be solvable with FEM, obviously) both in 2D and 3D.

For our experiments we choose $V_h$ to be the standard space of piecewise-linear continuous functions 
on triangular mesh;  $w_h$ is represented by piecewise-constant functions on triangles. The
space $V_h'$ needed for the error bound is also the space of piecewise-linear continuous functions on the mesh  $\Omega_h'$ which is a uniformly refined version of original mesh $\Omega_h$. In the experiments we use three different meshes generated by GMSH \cite{gmsh} (Figure \ref{fig:meshes}).
Also, since we don't need regularizer to be very precise  
for simplicity of implementation we used
\[
        \frac{\partial F_{h,\alpha}}{\partial \eta_i} = -\left(\frac{\partial{K}}{\partial \eta_i} u_h, u_h\right) -2 \alpha \left(\frac{\partial M}{\partial \eta_i}u_h', M^{-1}e_h'\right) + 2\alpha  \left(\frac{\partial K}{\partial \eta_i}u_h, K^{-1}e_h\right),
\]
as compromise between \eqref{eh_sens} and \eqref{eh_dash_sens} which does not require 
projections.
\begin{figure}[!htb]
    \centering
\centering
\begin{subfigure}{0.32\linewidth} 
\centering
\resizebox{!}{0.9\textwidth}{
\includegraphics{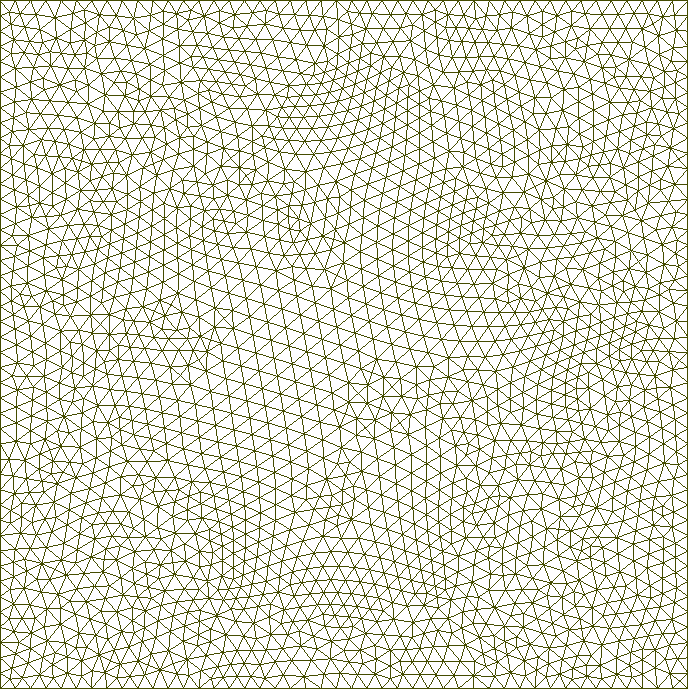}
}
        \subcaption{4736 elements}
\end{subfigure}
\begin{subfigure}{0.32\linewidth}
\centering
\resizebox{!}{0.9\textwidth}{
\includegraphics{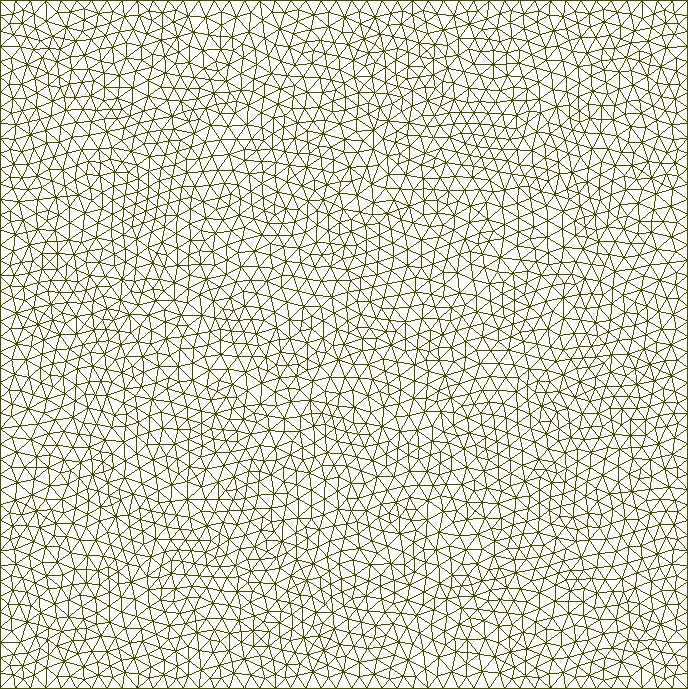}
}
        \subcaption{5578 elements}
\end{subfigure}
\begin{subfigure}{0.32\linewidth} 
\centering
\resizebox{!}{0.9\textwidth}{
\includegraphics{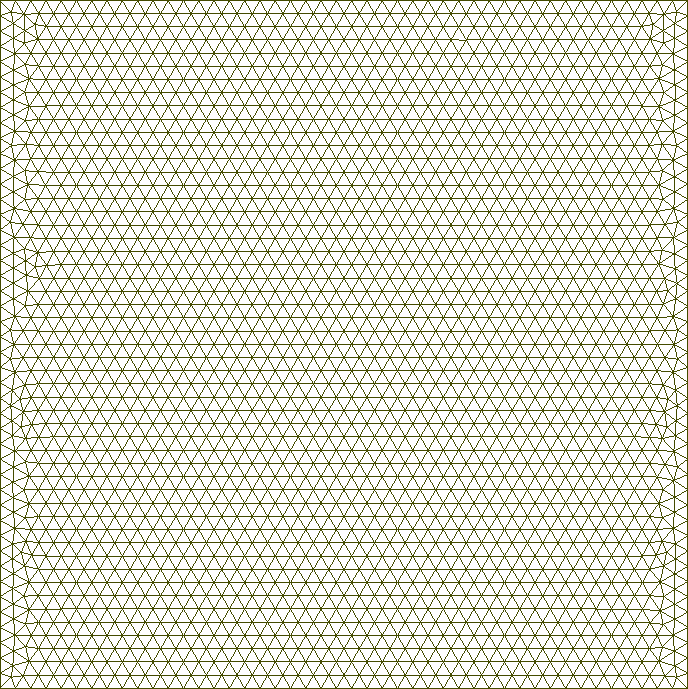}
}
        \subcaption{4900 elements}
\end{subfigure}
\caption{Left to right: Meshes 1,2,3 used in experiments.} 
\label{fig:meshes}
\end{figure}

\subsubsection{Dependence on regularization parameter and mesh-dependence} \label{sssec:params}
We first explore the dependence of the final functional value on the regularization parameter $\alpha$,
in the range from $10^{-5}$ until $10^{12}$. Simultaneously, we explore the mesh dependence of the method
by running  Algorithm \ref{al:greedy} on all combinations of meshes and regularization parameters.
\begin{figure}[!htb]
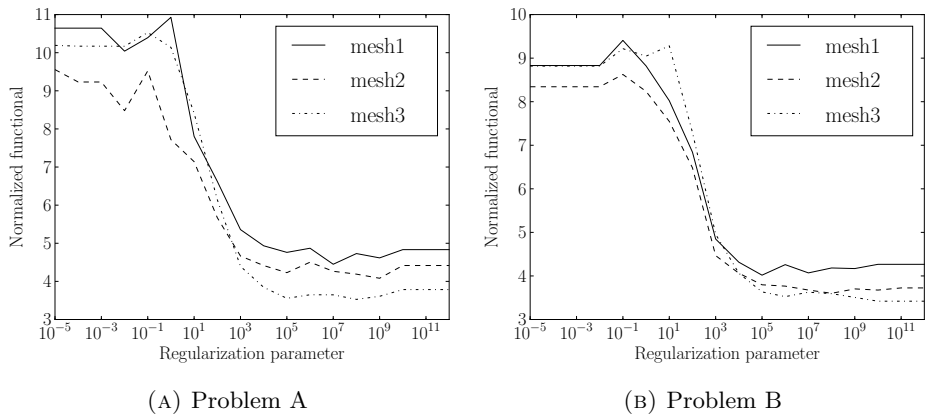

        \begin{subfigure}{0.49\linewidth}
        \resizebox{!}{0.8\textwidth}{
\input{Gersborg_func_over_alpha.pgf}
}
        \caption{Problem A}
        \end{subfigure}
        \begin{subfigure}{0.49\linewidth}
        \resizebox{!}{0.8\textwidth}{
\input{Gao_func_over_alpha.pgf}
}
        \caption{Problem B}
        \end{subfigure}
\caption{Dependence of functional on the regularization parameter $\alpha$.}
\label{fig:alpha_dep}
\end{figure}

As evident from the Figure \ref{fig:alpha_dep}, there is a noticeable dependence on the mesh choice at this resolution;
At the same time, it is also clear that the method is relatively insensitive to the choice of $\alpha$, as long as it is
sufficiently large: and the effects of the choice of $\alpha$ are not mesh-dependent, and are similar for both problems. 

Observe that the optimal values of $\alpha$ are large: effectively, minimization of the error needs to be prioritized
over minimizing the discrete functional value for best results. Intuitively, one can interpret this as searching for
optimal designs in the constrained space of functions minimizing the error on a finer mesh.
As evident from Figure~\ref{fig:refinement} functional converges with respect to mesh size 
very fast, with only one level of refinement sufficient.

\begin{figure}[!htb]
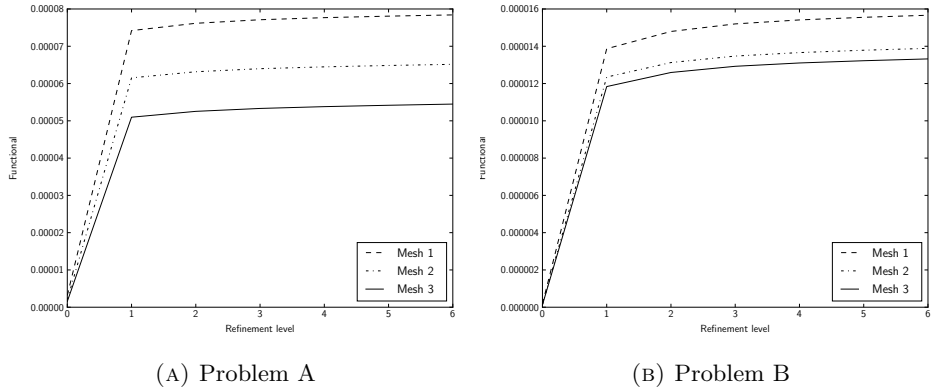

        \begin{subfigure}{0.49\linewidth}
        \resizebox{!}{0.8\textwidth}{
\input{Gersborg_multilevel.pgf}
}
        \caption{Problem A}
        \end{subfigure}
        \begin{subfigure}{0.49\linewidth}
        \resizebox{!}{0.8\textwidth}{
\input{Gao_multilevel.pgf}
}
        \caption{Problem B}
        \end{subfigure}
\caption{Dependence of functional on number of times mesh was refined for configurations obtained by greedy algorithm with $\alpha=10^5$.}
\label{fig:refinement}
\end{figure}

\subsection{Problem A}
Based on the results shown in Figure~\ref{fig:alpha_dep}, we use $\alpha = 10^8$ in the remaining experiments. 
The material distribution found by our algorithm can be seen in Figure~\ref{fig:ger}, (c).
The obtained material distribution features thin heat-sink channels, similar to the ones from solutions of other authors.

A fundamental difference between our method and the method of \cite{gersborg2006} is that we solve the problem with
discrete material values directly, while \cite{gersborg2006} solves a relaxation of the problem with a continuous material
distribution. This approach has significant advantages as, due to convexity \cite{lau2001convex}, such problems are easier
to solve. In addition, by searching a larger space of material distributions, lower values of the functional can be obtained. However, as in practice creating continuous material variation is typically difficult or impossible, a final thresholding step
needs to be applied, which typically leads to a significant increase in the functional. 

\begin{figure}[!htb]
\begin{subfigure}{0.32\linewidth}
\centering
\resizebox{!}{\textwidth}{
\includegraphics{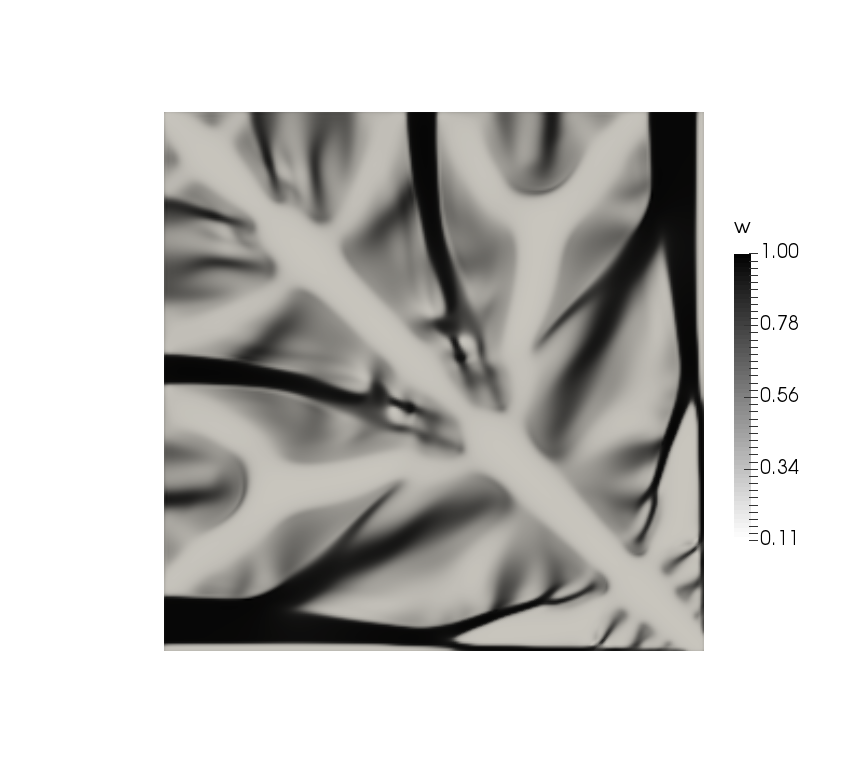}
}
\subcaption{$\widehat{F}_h  = 2.455$. SIMP solution.}
\end{subfigure}
\begin{subfigure}{0.32\linewidth} 
\centering
\resizebox{!}{\textwidth}{
\includegraphics{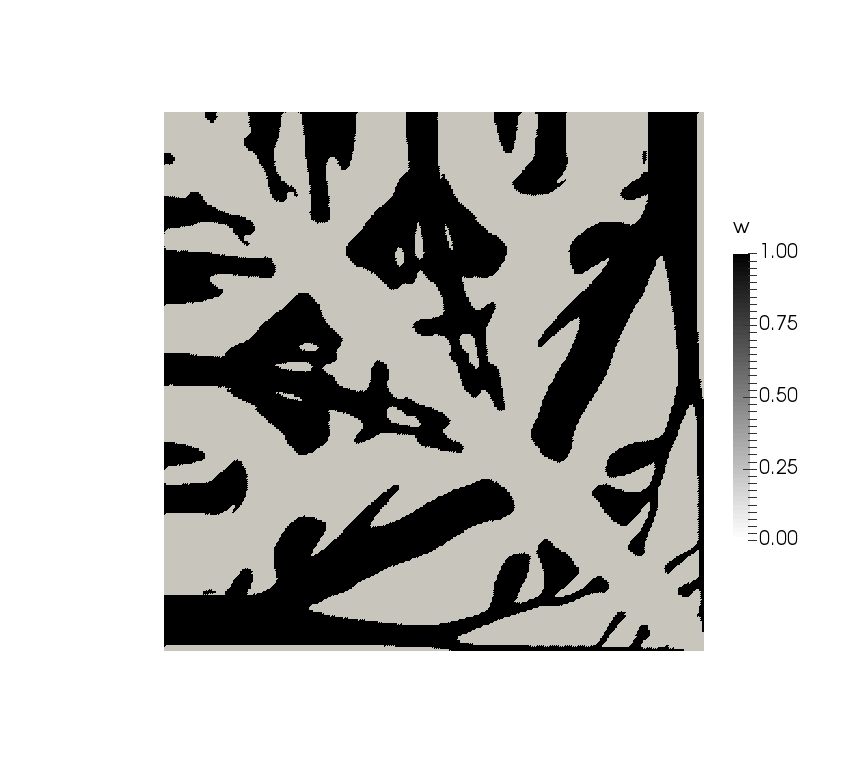}
}
\subcaption{$\widehat{F}_h  = 7.42$. Binarized solution}
\end{subfigure} 
\begin{subfigure}{0.32\linewidth} 
\centering
\vskip 9mm
\resizebox{!}{0.7\textwidth}{
\includegraphics{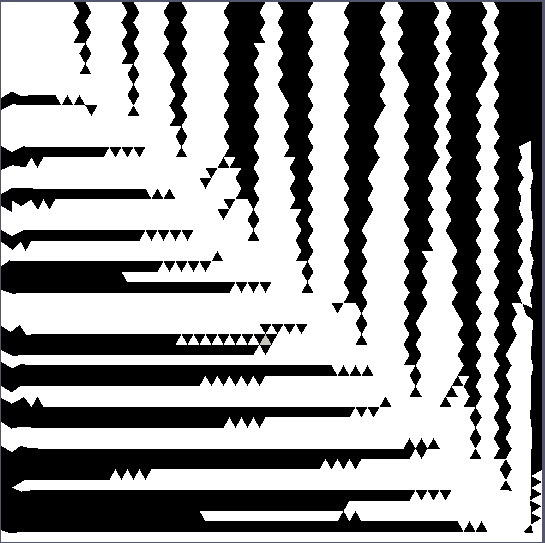}
}
\vskip 5mm
\subcaption{$\widehat{F}_h  = 3.528$,\newline $4898$ elements, mesh type 3, $\alpha = 10^8$}
\end{subfigure}
\caption{Thresholding SIMP solution results in a significant increase in the functional. } 
\label{fig:ger}
\end{figure}

Our solution appears to be less fractal  than solution from \cite{gersborg2006}.
Note that \cite{gersborg2006} considers continuous optimization problem, as material density allowed to be in $[\varepsilon, 1]$.

Aggregated results for Problem A are shown in Table \ref{gersborg_table}:
\begin{center} 
  \captionof{table}{Results and comparisons} \label{gersborg_table}
  \begin{tabular}{ | l | c | c | c | c |}
  \hline
          Paper & Elements & Solution type &  $\widehat{F}_h $ \\ 
  \hline
          This &   14694 & Discrete &  3.53 \\ \hline
          \cite{gersborg2006}  & 16384 & Continuous &  2.718\\ \hline
          \cite{gersborg2006} with  \cite{sigmund2001}  & 16384 & Continuous & 3.259 \\ \hline
\end{tabular}
\end{center}

\subsection{Problem B}
Solution of this optimization problem features the same 
thin heat-sink channels as with problem A. Our solution looks different 
from solution of \cite{gao2008}, which presents smaller number of larger channels 
with fractal-like border. On the other hand, our solution propose more thin channels
with relatively smooth borders. The jigsaw borders of heat sinks are due to 
mesh effects (compare smoother borders of the channels in top and bottom versus rougher borders of the channels in left and right).
The comparison results are shown in Table \ref{gao_table}.
\begin{figure}[!htb]
    \centering
\centering
\begin{subfigure}{0.32\linewidth} 
\centering
\resizebox{!}{0.9\textwidth}{
\includegraphics{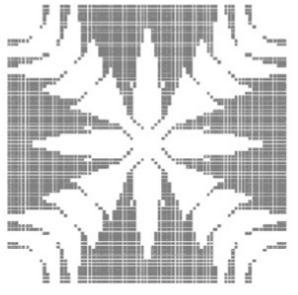}
}
\subcaption{$\widehat{F}_h  = 8.256$, quadrilateral mesh with 6400 elements.}
\end{subfigure}
\begin{subfigure}{0.32\linewidth}
\centering
\resizebox{!}{0.9\textwidth}{
\includegraphics{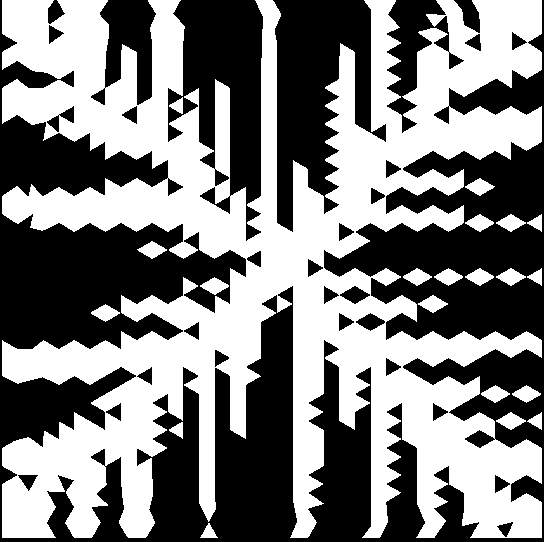}
}
\subcaption{$\widehat{F}_h  = 4.401$, mesh 2 with 2466 elements, \newline $\alpha = 10^5$}
\end{subfigure}
\begin{subfigure}{0.32\linewidth} 
\centering
\resizebox{!}{0.9\textwidth}{
\includegraphics{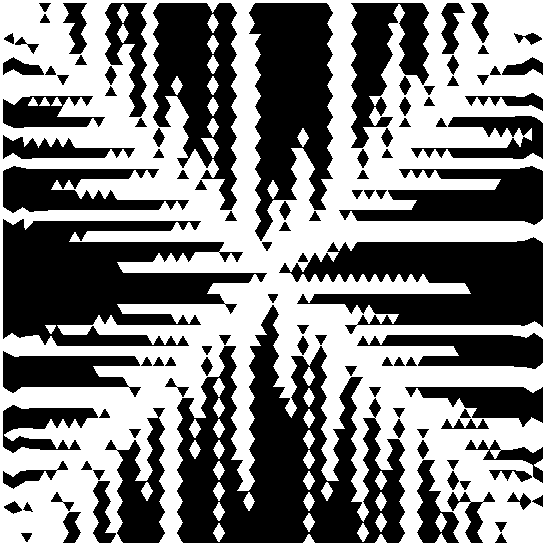}
}
        \subcaption{$\widehat{F}_h  = 3.42$, mesh 3 with 4898 elements, \newline $\alpha = 10^{12}$.}
\end{subfigure}

\caption{Left to right: material distribution from \cite{gao2008}, \cite{zuo2005} and ours for problem B.} 
\label{fig:gao}
\end{figure}

\begin{center}
  \captionof{table}{Results and comparisons} \label{gao_table}
  \begin{tabular}{ | l | c | c | c | c |}
  \hline
          Paper &  Elements & Solution type & $\widehat{F}_h$  \\ 
  \hline
          This & 14694 & Discrete &  $3.42$ \\ \hline
          \cite{gao2008} & 6400 & Discrete  & $8.256$ \\ \hline
\end{tabular}
\end{center}

\begin{figure}[!htb]
    \centering
\begin{subfigure}{0.49\linewidth} 
\centering
\resizebox{!}{0.8\textwidth}{
\includegraphics{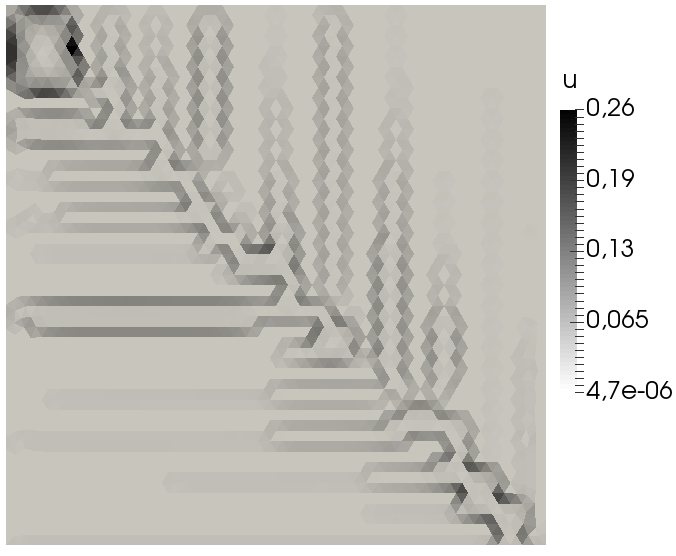}
}
\subcaption{Solution}
\end{subfigure}
\centering
\begin{subfigure}{0.49\linewidth} 
\centering
\resizebox{!}{0.8\textwidth}{
\includegraphics{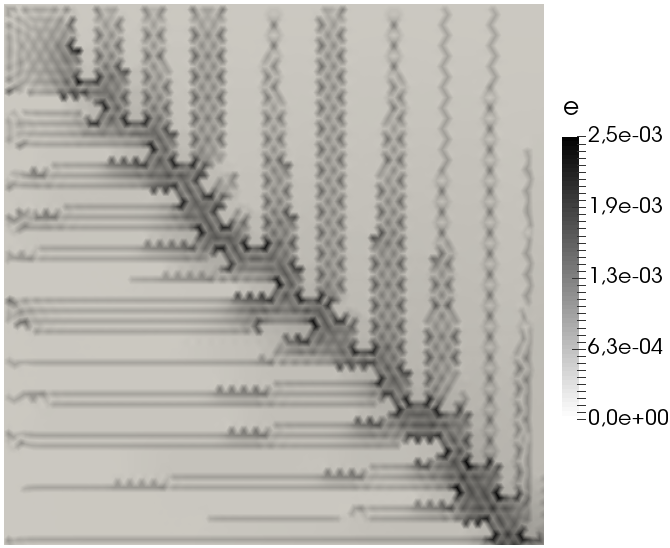}
}
\subcaption{Error}
\end{subfigure}
\caption{Solution and error estimates for problem A.}
\label{fig:ours}
\end{figure}

\section{Discussion and related work}
\label{sec:discussion}
\subsection{Previous approaches to regularization}
Regularization techniques were used in topological optimization primarily to solve two problems: to avoid formation 
of spurious solutions (checkerboard patterns) and to remove mesh dependency. Regularization was applied in the context
of all commonly used approaches to topology optimization: density-based,  hard-kill and level set/phase-field methods.

Most regularization approaches can be roughly divided into two groups: filtering methods and constraint techniques \cite{deaton2014survey}.

There are several types of filtering methods: the two most widely used types are \emph{density filters}  which directly modify the solution (\cite{bourdin2001filters}, \cite{bruns2001topology}),  and  \emph{sensitivity filters} (\cite{sigmund1997}, \cite{sigmund2001}). Recently filtering methods based on the Helmholtz equation were applied to both sensitivity and material density 
(\cite{lazarov2011filters}, \cite{kawamoto2011heaviside}). Another recent development is the
design of density filters which are based on projection schemes. Those methods operate on density field $\rho$, its filtered 
version $\tilde{\rho}$ and a projected field $\hat{\tilde{\rho}}$. Most filtering methods result in
continuous density distributions, which need to be thresholded in the context of hard-kill methods, 
the projection methods reported to have good convergence and almost discrete designs 
(\cite{guest2004achieving}, \cite{sigmund2007morphology}, \cite{xu2010volume}, \cite{guest2011eliminating},
\cite{duhring2010design}).

Constraint methods modify optimization problem by imposing a constraint either as hard constraint, or via a penalty term
in the objective functional.  A common type constrained quantity is an integral of the form
$$
\int_{\Omega}\|\nabla w \|_qd\Omega,
$$
i.e., a $L_q$-norm of the gradient. E.g., $q=1$ leads to the total variation constraint or penalty \cite{ambrosio1993optimal}, \cite{haber1996perimeter},  $q=2$ results in the usual $L_2$ gradient norm (Dirichlet energy) and  $q=\infty$ yields a pointwise constraint on the gradient magnitude \cite{niordson1983optimal}, \cite{petersson1998slope}. In \cite{borrvall2001topology},
proposed a similar penalty based on density discrepancy. Regularization for level-set methods can be found, e.g., in  \cite{allaire2004structural}, where the primary goal of regularization is to smooth the velocity field and maintain smoothness of the level set.   Detailed reviews of relevant literature can be found in two excellent surveys: \cite{deaton2014survey} and \cite{sigmund2013topology}. 

In contrast to most previous methods, our approach aims to use regularization to control the error of the FEM solution directly,
and in this way, increase robustness of the method, by potentially increasing the value of the discretized functional for
the solution, but ensuring that it does not deviate too much from the underlying smooth functional.

Our experiments demonstrate however, that in the context of hard-kill methods, this regularization actually results in a \emph{decrease} of the (unregularized) functional value.
This happens because approximation error introduced by finite elements methods
creates false minimums for objective functional. 
First, greedy methods will likely to get stuck in those minimums. 
Second those false minimums depend on mesh and result in growth of 
functional on different or refined mesh.
Regularization removes those false minimums and mesh dependence, making 
functional value being closer to its true physical value.

\begin{figure}[!htb]
    \centering
\begin{subfigure}{0.49\linewidth} 
\centering
\resizebox{!}{0.8\textwidth}{
\includegraphics{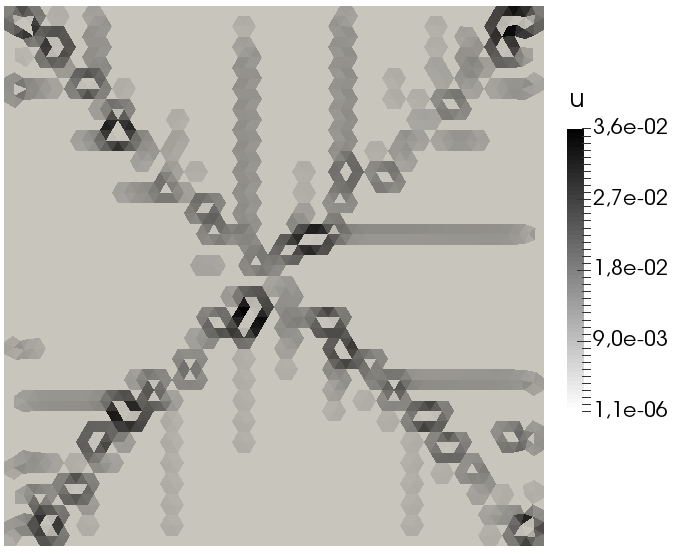}
}
\subcaption{Solution}
\end{subfigure}
\centering
\begin{subfigure}{0.49\linewidth} 
\centering
\resizebox{!}{0.8\textwidth}{
\includegraphics{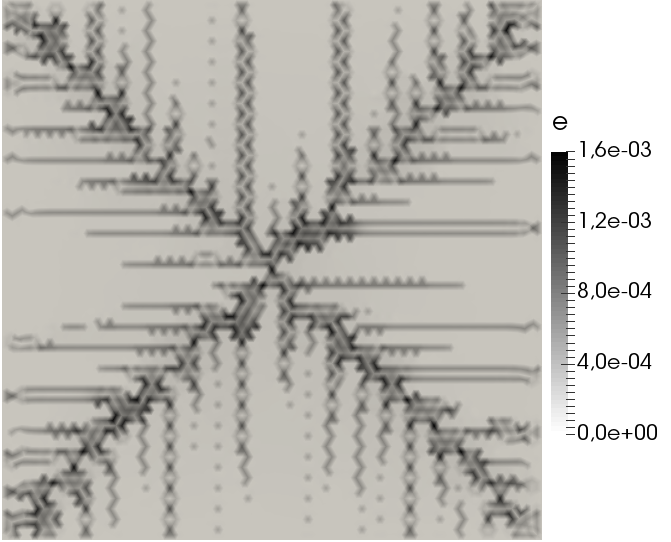}
}
\subcaption{Error}
\end{subfigure}
\caption{Solution and error estimation for problem B.}
\label{fig:ours}
\end{figure}

\section{Conclusions and future work}

Greedy methods based on sensitivity analysis are popular due to their ease of  implementation on top of existing FEM packages. 
On the downside, the integer programming problems resulted from discretization are complex, and the convergence behavior
is often unpredictable. 

In our experiments we have demonstrated that our regularization applied to ``hard-kill'' method allows it achieve results close to  those of a density-based method from \cite{gersborg2006}, and a significant improvement compared to a previously proposed
``hard-kill'' method \cite{gao2008}, with the functional value two times less.

The estimator used to regularize our functional requires additional finer mesh and solution of additional problem on that mesh.
Although it is very easy to implement, this incurs an additional cost. 
In the future, we will explore regularization with a more efficient (e.g., adaptive) error estimator, along with applications of approach proposed in this paper to topology optimization in the context of elasticity as well as to 3D problems.

\bibliographystyle{elsarticle-num}
\bibliography{references}







\end{document}